\hoffset-8mm 


\documentclass[12pt,a4paper,reqno]{amsart}
\usepackage {amsfonts}


\newtheorem{theorem}{Theorem}
\newtheorem{lemma}[theorem]{Lemma}
\theoremstyle{remark}
\newtheorem{remark}[theorem]{Remark}

\DeclareMathOperator{\minor}{minor}
\DeclareMathOperator{\Trace}{Trace}

\author{I.G. Korepanov}
\thanks{South Ural State University, Chelyabinsk, Russia. E-mail: kig@susu.ac.ru}
\title[Geometric torsions and a topological field theory]{Geometric torsions and an Atiyah-style topological field theory}
\date{}

\textwidth=1.08\textwidth 

\newcommand{\kletka}{} 
\def\kletka#1{\vbox{\hsize 3cm \tolerance 10000 \footnotesize  \noindent #1}}

\newcommand{\sdvigvverh}{} 
\def\sdvigvverh#1{\begin{matrix}#1\\ \mbox{}\end{matrix}}

\begin{document}

\begin{abstract}
The construction of invariants of three-dimensional manifolds with a triangulated boundary, proposed earlier by the author for the case when the boundary consists of not more than one connected component, is generalized to any number of components. These invariants are based on the torsion of acyclic complexes of geometric origin. The relevant tool for studying our invariants turns out to be F.A.~Berezin's calculus of anti-commuting variables; in particular, they are used in the formulation of the main theorem of the paper, concerning the composition of invariants under a gluing of manifolds. We show that the theory obeys a natural modification of M.~Atiyah's axioms for anti-commuting variables.
\end{abstract}

\maketitle

\section{Introduction}

In this paper, we conclude the construction of one version of invariants of three-dimensional manifolds with triangulated boundary, started in paper~\cite{I}. To be more exact, we present a construction of these invariants for a compact connected three-dimensional manifold with an arbitrary number of boundary components, while in~\cite{I} we restricted ourselves to not more than one component.

Our invariants --- we call them ``geometric'' --- are built on the basis of torsions of acyclic complexes made of vector spaces which consist of differentials of geometric quantities assigned to the simplexes of a manifold triangulation, that is, vertices, edges and so on. We don't have a general recipe for such a geometrization, but in practice, it turns out that the point of at least very many homogeneous spaces of Lie groups can be used as ``coordinates'' of triangulation vertices, and then build a complex using intuitive, although typically not very complicated, considerations. In the present paper, we are using the group~$\mathrm E(3)$ of motions of the three-dimensional Euclidean space~$\mathbb R^3$, and this space itself as homogeneous space; of other geometries which can be used for three-dimensional triangulated manifolds, we can mention, if restrict ourselves to the already written papers, the \emph{four-dimensional} Euclidean geometry~\cite{3man4geo} and \emph{two-dimensional} affine volume-preserving geometry~\cite{SL2,SL2'}. There are also much more available unpublished examples, in particular, the paper~\cite{kkm} is in preparation, where we are using the group $\mathrm{PSL}(2,\mathbb C)$ of fractional-linear transformations of the complex plane; there are also ``noncommutative'' versions of some theories, where elements of an associative algebra are used for ``coordinates'' of vertices, and nothing more is required than the invertibility of some combinations of those elements. For four-dimensional manifolds, we have proposed a construction of a complex based on Euclidean geometry which was as well four-dimensional~\cite{33,24,15}, and there is also the paper~\cite{kiev} where a formula is proved indicating the possibility of using three-dimensional affine volume-preserving geometry.

Recall that \emph{Pachner moves} are elementary rebuildings of a triangulation of a piecewise-linear manifold; using a long enough chain of such moves, one can pass from one triangulation to any other. The remarkable fact observed in every specific case but not yet having a general explanation is that the torsion of an acyclic complex of geometric origin is transformed in a simple multiplicative way under Pachner moves. The typical case is when the torsion is multiplied by some powers of volumes of simplexes arising under the Pachner move, and divided by the same powers of volumes of disappearing simplexes. Thus, torsion divided by the same powers of volumes of \emph{all} participating simplexes is an invariant of all Pachner moves, the examples of which in this paper are formulas (\ref{inv-closed}) and~(\ref{inv-b}). We recall also that the starting point of our theory~\cite{3dcase} consisted exactly in specific formulas related with Pachner moves, and in the first place, with move $2\to 3$ (two tetrahedra with a common face are replaced with three tetrahedra with a common edge) for three-dimensional triangulated manifolds.

In paper~\cite{I}, we have demonstrated how to construct not one but many algebraic complexes for a manifold with a one-component boundary, on which (boundary) a fixed triangulation is given. Such complex depends on two arbitrary sets of boundary edges $\mathcal C$ and~$\mathcal D$ of equal cardinality; we have shown on examples that such complex is acyclic in many enough cases to arouse interest (and if it is not acyclic, its torsion is assumed to be zero). Under a change of boundary triangulation, the invariants undergo a linear transform. Moreover, it proved possible to write out a formula expressing the invariant of the closed manifold obtained by gluing two manifolds with boundary (The boundaries are identical and identically triangulated, but oppositely oriented) in terms of the invariants of these latter. The formula had the form of a scalar product, which corresponds to the known axioms of M.~Atiyah~\cite{atiyah,atiyah1} for a topological quantum field theory. By the time of writing the paper~\cite{I}, we could not handle the case of a multicomponent boundary.

In this paper we show how to overcome this difficulty (by considering ``sways'' of each boundary component as a whole, see section~\ref{sec:many-components}). In doing this, it becomes clear that it is convenient to unite all invariants in a generating function involving anti-commuting (Grassmann) variables: this corresponds adequately, in particular, to the sign change in the invariant under a change of edge ordering. The real role of anti-commuting variables comes out, however, when we write the formula for the composition of invariants under a gluing of manifolds by some boundary components: the result is, up to a factor, the Berezin integral of the product of generating functions for the two manifolds in the variables corresponding to the glued boundary components! Thus, the obtained theory does not obey Atiyah's axioms literally, but does obey quite well their natural modification for the case of Grassmann variables. We describe this modification of axioms; an interesting distinction of the Grassmann case from the usual one turns out to be the relation between the invariants of spaces $\Sigma\times I$, $\Sigma\times S^1$ and the dimensionality of the linear space corresponding to~$\Sigma$, where $\Sigma$ is a two-dimensional surface, $I=[0,1]$ is a segment, and $S^1$ is a circle. We describe this relation briefly, leaving its detailed study for a separate work.

The contents of the rest of the sections of this paper is as follows. In section~\ref{sec:many-components} we present our construction for a manifold with any number of boundary components, recalling in passing the necessary basic notions. In section~\ref{sec:Berezin-calculus} we introduce the generating function of Grassmann variables for the obtained invariants and show that, at this stage already, the Berezin integral is present. In section~\ref{sec:gluing} we prove the theorem about the composition of invariants under a gluing of manifolds. In section~\ref{sec:Atiyah-axioms} consider the modification of Atiyah's axioms. In section~\ref{sec:discussion} we discuss the results and plans for further research.

\section{The complex and invariants for a manifold with any number of boundary components}
\label{sec:many-components}

In this section, we present a construction of an algebraic complex and invariants for a connected triangulated compact oriented three-dimensional manifold~$M$ whose boundary has $m$ connected components, $m=0,1,\ldots$. While we present new, with regard to paper~\cite{I}, material in a rather detailed way, we just briefly remind the facts already written in paper~\cite{I} and propose the reader to consult it when necessary.

We begin with \emph{geometrization} of triangulation simplexes, to be exact, with assigning to its every vertex~$A$ three real numbers $x_A,y_A,z_A$ which we are going to call its unperturbed Euclidean coordinates. These numbers are arbitrary, with the only requirement that they should lie in a general position with regard to all further constructions. In this way, Euclidean geometry is introduced in every tetrahedron, although this by no means applies to the whole~$M$, because the resulting tetrahedra in Euclidean space~$\mathbb R^3$ are allowed to intersect.

We use (in particular) the following vector spaces in the construction of our complexes:
\begin{itemize}
\item Lie algebra $\mathfrak e(3)$ of infinitesimal isometries of~$\mathbb R^3$,
\item space $(dx)$ of column vectors of coordinate differentials of triangulation vertices (which vertices exactly, we will specify in every individual case),
\item space $(dl)$ of column vectors of edge length differentials,
\item space $(d\omega, d\alpha)$ of column vectors of deficit angles at inner edges differentials~$d\omega$ and differentials~$d\alpha$ of minus inner dihedral angles at boundary edges (which edges exactly are used in this and previous items, we will also specify in every individual case).
\end{itemize}

We will need linear mappings $f_1$, $f_2$ and~$f_3$ between these vector spaces, these mappings being the differentials of ``macroscopic'' mappings having a simple geometric meaning: $f_1=dF_1$, $f_2=dF_2$, $f_3=dF_3$, where $F_1$ shows where the vertex coordinates are moved from their unperturbed locations under the action of an element of group of isometries~$\mathrm E(3)$ (so, $F_1$ acts from $\mathrm E(3)$ to the set of coordinates, while unperturbed coordinates of all vertices are parameters of the theory); $F_2$~builds edge lengths from given vertex coordinates (``perturbed''); $F_3$~builds deficit and dihedral 
from given edge lengths.

In the algebra $\mathfrak e(3)$, considered as linear space, one can choose a natural basis of three infinitesimal translations and three rotations, which allows to consider this algebra, like our other spaces, as consisting of column vectors, and identify mappings $f_1$, $f_2$ and~$f_3$ with matrices. If we also take into account that the partial derivatives, of which matrix~$f_3$ is built, possess symmetry properties~\cite{3dcase}:
$$
\frac{\partial\omega_i}{\partial l_j} = \frac{\partial\omega_j}{\partial l_i},\quad
\frac{\partial\alpha_i}{\partial l_j} = \frac{\partial\alpha_j}{\partial l_i},
$$
then we can prolong the chain of mappings $f_1,f_2,f_3$ and obtain, for example, an algebraic complex for a manifold \emph{without boundary} (thus no angles~$d\alpha$), written out, for instance, in formula~(3) 
of paper~\cite{I}:
\begin{equation}
0\longrightarrow \mathfrak e(3) \stackrel{f_1}{\longrightarrow} (dx) \stackrel{f_2}{\longrightarrow} (dl) \stackrel{f_3=f_3^{\rm T}}{\longrightarrow} (d\omega) \stackrel{f_4=-f_2^{\rm T}}{\longrightarrow} (dx^*) \stackrel{f_5=f_1^{\rm T}}{\longrightarrow} \mathfrak e(3)^* \longrightarrow 0.
\label{complex-closed}
\end{equation}
Defining the \emph{torsion} of complex~(\ref{complex-closed}) by formula
\begin{equation}
\tau \stackrel{\rm def}{=} \frac{\minor f_1 \, \minor f_3 \, \minor f_5}{\minor f_2 \, \minor f_4},
\label{tau}
\end{equation}
where the choice of minors is explained in subsection~2.2 
of paper~\cite{I},\footnote{We will soon consider in detail the questions of choosing the minors, and in a more general case.} we get an invariant of a manifold without boundary from formula
\begin{equation}
I(M)=\frac{\tau \prod_{\textrm{over all tetrahedra}}(-6V)}{\prod_{\textrm{over all edges}}l^2}.
\label{inv-closed}
\end{equation}
Here $V$ is the oriented tetrahedron volume, and $l$ --- edge length.

Now we show how to generalize this construction for a manifold with an arbitrary number $m$ of boundary components. The main justification of such exactly generalization will be the fact that, as we will see in section~\ref{sec:gluing}, an analogue of theorem~15 
from paper~\cite{I} holds stating that the invariants of the result of the gluing of manifolds $M_1$ and~$M_2$ by some components of their boundaries can be obtained from the invariants of $M_1$ and~$M_2$. Our construction will embrace in a uniform way the cases $m=0$ and $m=1$ as well, to which separate sections in~\cite{I} were devoted. Here is how our algebraic complex looks like:
\begin{multline}
0\longrightarrow \mathfrak e(3) \stackrel{f_1}{\longrightarrow} \begin{pmatrix} dx_{\textrm{inner}}\\ \oplus\\ m\mathfrak e(3) \end{pmatrix} \stackrel{f_2}{\longrightarrow} (dl)\\ \stackrel{f_3}{\longrightarrow} (d\omega, d\alpha) \stackrel{f_4}{\longrightarrow} \begin{pmatrix} dx_{\textrm{inner}}^*\\ \oplus\\ m\mathfrak e(3)^* \end{pmatrix} \stackrel{f_5}{\longrightarrow} \mathfrak e(3)^* \longrightarrow 0.
\label{complex-b}
\end{multline}

Now we describe in detail the meaning of all symbols in~(\ref{complex-b}). Algebra $\mathfrak e(3)$, coming after ``$0\to $'', is the algebra of infinitesimal motions of Euclidean~$\mathbb R^3$, as before. But an important innovation occurs in the next term $\left( \begin{smallmatrix} dx_{\textrm{inner}}\\ \oplus\\ m\mathfrak e(3) \end{smallmatrix} \right)$. Here ``$dx_{\textrm{inner}}$'' means the collection of all  \emph{inner} vertex coordinate 
differentials, while only Euclidean motions of every boundary component \emph{as a whole} are permitted, by definition, to vertices lying on the boundary. We call such motions \emph{sways} of boundary components. To an infinitesimal sway of every component, an element of the very same algebra~$\mathfrak e(3)$ clearly corresponds, hence our notation ``$m\mathfrak e(3)$'' for the set of all sways. By definition, $f_1$ sends an element $a\in\mathfrak e(3)$ into $m$ identical sways of boundary components given by the same element~$a$, and into such values $dx_{\textrm{inner}}$ which are vertex displacements under the infinitesimal motion~$a$.

We choose now, like in paper~\cite{I}, two sets of boundary edges $\mathcal C$ and~$\mathcal D$ of the same cardinality (summed over the boundary components, the numbers of edges in $\mathcal C$ and~$\mathcal D$ can differ for each separate component). The next vector space~$(dl)$ in complex~(\ref{complex-b}) consists, by definition, of column vectors of length differentials for all inner edges, as well as boundary edges from set~$\mathcal C$. Mapping~$f_2$ is defined geometrically: independent displacements~$dx$ of inner vertices, together with boundary component sways, clearly determine changes of lengths~$dl$ (and, certainly, we get $dl_i=0$ if edge~$i$ lies on the boundary).

Similarly, vector space~$(d\omega, d\alpha)$ consists by definition of column vectors made of infinitesimal deficit angles~$d\omega_i$ for all inner edges~$j$ and of differentials of minus dihedral angles~$d\alpha_k$ for $k\in \mathcal D$.

Then, the further continuation of complex~(\ref{complex-b}) goes by symmetry, like it was for complex~(\ref{complex-closed}); we have only to mention that not simply~$-f_2^{\rm T}$ is taken for~$f_4$, but also the edge set $\mathcal C$ must be replaced by~$\mathcal D$.

The next theorem justifies the use of words ``algebraic complex'' to the sequence of spaces and mappings~(\ref{complex-b}).

\begin{theorem}
\label{th:complex-b}
The following identities hold for sequence~(\ref{complex-b}):
$$
f_2\circ f_1=0, \quad f_3\circ f_2=0, \quad f_4\circ f_3=0 \quad \textrm{and} \quad f_5\circ f_4=0.
$$
\end{theorem}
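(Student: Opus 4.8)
The plan is to verify the four identities one at a time, exploiting the fact that each $f_i$ is the differential of a concrete geometric map and that two of the maps ($f_4$, $f_5$) are, up to sign, transposes of earlier ones. The guiding principle is that $f_{i+1}\circ f_i=0$ is equivalent to a statement about commuting diagrams of the ``macroscopic'' maps $F_1,F_2,F_3$, so I would try to read each vanishing off geometry rather than from matrix bookkeeping.

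First, $f_2\circ f_1=0$. The composition $F_2\circ F_1$ sends an element of $\mathrm E(3)$ (acting as a genuine rigid motion of all vertices simultaneously — on the boundary this is precisely the simultaneous sway of every component by the same $a\in\mathfrak e(3)$, which is how $f_1$ was defined) to the resulting edge lengths. Since a rigid motion of $\mathbb R^3$ preserves all Euclidean distances, $F_2\circ F_1$ is constant, hence its differential $f_2\circ f_1$ vanishes. Next, $f_3\circ f_2=0$. Here I would invoke the Schläfli-type differential identity: for any single tetrahedron, $\sum_i \ell_i\, d\theta_i$ (sum over edges, $\theta_i$ the dihedral angle) is, up to the volume factor, an exact form, and more to the point the deficit angles $\omega$ and the boundary dihedral angles $\alpha$ are functions of the edge lengths only; composing $F_3\circ F_2$ expresses these angles directly as functions of vertex coordinates, but that is not yet zero. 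The correct statement is subtler: $f_3\circ f_2=0$ because an \emph{arbitrary} infinitesimal motion of the vertices that keeps all actual edge lengths fixed (which is the content of lying in $\operatorname{im} f_2$... no) — rather, I would argue as in~\cite{I}: the image of $f_2$ consists of length differentials $dl$ that are actually realized by moving vertices, and for such $dl$ the induced change in every deficit/dihedral angle is zero because the tetrahedra simply re-assemble in $\mathbb R^3$ without creating any curvature; formally this is the infinitesimal version of the statement that a geometric (flat) configuration has all deficit angles zero, so their differential along genuine deformations vanishes. This is the step I expect to be the main obstacle, since it is the one genuinely geometric input and the one whose analogue in~\cite{I} required the symmetry relations $\partial\omega_i/\partial l_j=\partial\omega_j/\partial l_i$; I would likely reduce it to the corresponding identity in~\cite{I} by checking that the only new feature here — restricting to edges in $\mathcal C$ on the domain side — does not affect the argument, since setting the excluded boundary-edge differentials to zero is just a restriction of a map that already had the property.

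The remaining two identities I would obtain by duality rather than by direct computation. Since $f_4=-f_2^{\mathrm T}$ (with $\mathcal C$ replaced by $\mathcal D$, i.e.\ taking the transpose with respect to the dual pairing that matches $d\omega$ against $d\alpha$-free data and uses $\mathcal D$ in place of $\mathcal C$) and $f_3=f_3^{\mathrm T}$ in the closed case — here $f_3$ is the appropriate rectangular block with the analogous symmetry — the identity $f_4\circ f_3=0$ is the transpose of $f_3^{\mathrm T}\circ (f_2^{\mathrm T})=0$, which is $(f_2\circ f_3^{\mathrm T})^{\mathrm T}$... more cleanly: $f_4\circ f_3=-f_2^{\mathrm T}\circ f_3=-(f_3^{\mathrm T}\circ f_2)^{\mathrm T}$, and $f_3^{\mathrm T}\circ f_2$ is (a block of) the already-established $f_3\circ f_2=0$ up to the symmetry of $f_3$ and the $\mathcal C$/$\mathcal D$ relabelling, so it vanishes. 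Likewise $f_5\circ f_4=f_1^{\mathrm T}\circ(-f_2^{\mathrm T})=-(f_2\circ f_1)^{\mathrm T}=0$ from the first identity. The only care needed is to make the dual pairings precise — in particular that the pairing used to define $f_4$ as a transpose of $f_2$ is the one in which the $\mathcal C$-block of $(dl)$ is dual to the $\mathcal D$-block of the cochain space — but once those identifications from~\cite{I} are in place, the last two identities are purely formal consequences of the first two.
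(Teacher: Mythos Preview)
Your overall strategy---verify $f_2\circ f_1=0$ and $f_3\circ f_2=0$ geometrically, then get the remaining two identities by transposition---is exactly the paper's approach (the paper simply refers back to the analogous theorem in~\cite{I} and adds one remark). Your duality argument for $f_5\circ f_4=-(f_2\circ f_1)^{\mathrm T}=0$ and $f_4\circ f_3=-(f_3\circ f_2)^{\mathrm T}=0$ (with $\mathcal C\leftrightarrow\mathcal D$) is fine once the first two identities are in hand.

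Where you go astray is in naming the ``only new feature'' relative to~\cite{I}. The sets $\mathcal C,\mathcal D$ were already present there; what is new in complex~(\ref{complex-b}) is the direct summand $m\mathfrak e(3)$ of \emph{sways} of the boundary components sitting inside the second term. The paper's proof isolates precisely this: in~\cite{I} the boundary was motionless, so the total dihedral angle at each boundary edge---which equals $2\pi$ minus the exterior dihedral angle of the boundary surface at that edge---was automatically unchanged by motions in the domain of $f_2$, giving $d\alpha=0$; here the boundary components are allowed to move, but only rigidly, and one must observe that a rigid motion of a component still preserves those exterior dihedral angles, so $d\alpha=0$ continues to hold on $\operatorname{im} f_2$. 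Your phrase ``the tetrahedra simply re-assemble in $\mathbb R^3$ without creating any curvature'' is the right picture and, properly read, does cover the sway case; but you should make the $d\alpha$-part explicit rather than burying it, and drop the attribution to the symmetry relations $\partial\omega_i/\partial l_j=\partial\omega_j/\partial l_i$---those are what allow $f_4,f_5$ to be defined as transposes, not what makes $f_3\circ f_2$ vanish. The Schl\"afli detour is likewise irrelevant here.
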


\emph{Proof} goes similarly to the proof of theorem~6 
of paper~\cite{I}, with just one new point: in~\cite{I}, the composition $f_3\circ f_2$ gave zero~$d\alpha$ because the boundary was motionless, while now we get the same in the situation where each boundary component can move, but only as a whole.
\qed

\smallskip

The torsion~$\tau$ of complex~(\ref{complex-b}) is defined by the old formula~(\ref{tau}). It is convenient to assume that we choose the minors entering it (that is, a~$\tau$-chain in complex~(\ref{complex-b})) in some standard way, and introduce in this connection the notion of $\emph{minimal rigid construction}$ of triangulation edges of a three-dimensional manifold with boundary~$M$. We will also introduce a similar notion for a connected surface --- boundary component of~$M$ --- for our further needs.

We choose three \emph{inner triangulation vertices} $A$, $B$ and~$C$.\footnote{In practical computations, for instance, in papers~\cite{dkm,I}, triangulations not having inner vertices can well be used. Our invariants, however, do not depend on a triangulation of the manifold interior, which allows us to assume that the three inner vertices do exist.} We take for the minor of matrix~$f_1$ its rows corresponding to coordinate differentials $dx_A,dy_A,\allowbreak dz_A,\allowbreak dy_B,\allowbreak dz_B,dz_C$. We deal in a symmetrical way with~$f_5$, that is, take for the minor of this matrix \emph{columns} corresponding to the same differentials.

Then we must choose edges corresponding to the rows of minor of~$f_2$; the same edges will correspond to the columns of minor of~$f_4$. This will be some minimal set of inner 
edges, such that if we fix their lengths and impose the additional condition of boundary component rigidity, \emph{all} vertices of~$M$ can move in the Euclidean~$\mathbb R^3$ only as a whole (the condition of boundary component rigidity can also be understood as fixing the lengths of all its edges). We say that such edges form a minimal rigid construction, and assume it to be fixed for a given triangulated~$M$.

Acting this way, we get all minors, except, maybe, $\minor f_3$, nonzero. We have to use for this latter the remaining inner edges, and sets $\mathcal C$ and~$\mathcal D$, according to what was explained above; if the complex comes out non-acyclic, $\minor f_3=0$.

In section~\ref{sec:gluing} we will need also a similar minimal rigid construction for a triangulated surface~$\Gamma$ --- the boundary component of manifolds $M_1$ and~$M_2$ by which they are glued into a new manifold~$M$. By definition, it consists of the minimal number of edges of~$\Gamma$ such that if their lengths are fixed, then all vertices in~$\Gamma$ can move in the Euclidean~$\mathbb R^3$ only as a whole.

\begin{remark}
Of course, we have already worked with minimal rigid constructions in paper~\cite{I}, not using, however, this term for them. It was exactly such a construction for a three-dimensional manifold that was used when describing minors of $f_2$ and~$f_4$ between theorems 6 
and~7, 
and for a one-component boundary --- in section~5 
of paper~\cite{I}.
\end{remark}

If edge sets $\mathcal C$ and~$\mathcal D$ coincide, no problems arise with the sign of~$\tau$, given the symmetry of the complex (compare remark~5 
in~\cite{I}). If, however, $\mathcal C$ and~$\mathcal D$ are different, then the sign does depend on the order in which we take edges in $\mathcal C$ and~$\mathcal D$, thus, these sets must be regarded as \emph{ordered}. The examination of this question involves far-going consequences, with which we will deal starting from section~\ref{sec:Berezin-calculus}.

We define the invariant corresponding to edge sets $\mathcal C$ and~$\mathcal D$ by the old formula~(10) 
in~\cite{I}:
\begin{equation}
I_{\mathcal C,\mathcal D}(M)=\frac{\tau \prod_{\textrm{over all tetrahedra}}(-6V)}{\prod_{\textrm{over inner edges}}l^2}
\label{inv-b}
\end{equation}
It is not hard to see that formula~(\ref{inv-b}) yields, in the cases $m=0$ and $m=1$, our old invariants from paper~\cite{I}.

\section{Geometric invariants and the Berezin calculus of anti-commuting variables}
\label{sec:Berezin-calculus}

Recall~\cite{B} that \emph{Grassmann algebra} over field~$\mathbb R$ or~$\mathbb C$ is an associative algebra with unity, having generators~$a_i$ and relations
$$
a_i a_j = -a_j a_i, \quad \textrm{including} \quad a_i^2 =0.
$$
Any element of a Grassmann algebra is a polynomial of degree $\le 1$ in each~$a_i$; in particular, such are the exponents encountered below, defined by the usual Taylor series, of which only a finite number of terms remains. The dimensionality of Grassmann algebra as a linear space is~$2^N$, where~$N$ is the number of generators, which we always assume to be finite.

The \emph{Berezin integral}~\cite{B} in a Grassmann algebra is defined by equalities
\begin{equation}
\int da_i =0, \quad \int a_i\, da_i =1, \quad \int gh\, da_i = g \int h\, da_i,
\label{integral_Berezina}
\end{equation}
if $g$ does not depend on~$a_i$ (that is, generator $a_i$ does not enter the expression for~$g$); multiple integral is understood as iterated one.

We turn now to formula~(\ref{inv-b}) for the invariant~$I_{\mathcal C,\mathcal D}(M)$. In it, $\minor f_3$ contains columns and rows corresponding to inner and boundary edges. The set~$\mathcal E_{\textrm{inner}}$ of such inner edges can be regarded as fixed for the given triangulation (not depending on $\mathcal C$ and~$\mathcal D$ and identical for rows and columns) --- these are all inner edges minus those involved in minors of $f_2$ and~$f_4$ according to section~\ref{sec:many-components}, that is, $\mathcal E_{\textrm{inner}}$ consists of inner edges not entering the minimal rigid construction described there. As concerns the boundary~$\partial M$, we, having in mind further applications,\footnote{namely, gluing of manifolds by boundary components in section~\ref{sec:gluing}} consider a minimal rigid construction in its every component as well. We denote the set of the boundary component edges \emph{not entering it} as~$\mathcal E_{\textrm{boundary}}^{(k)}$, where $k$ is the number of the component, and in the sequel we take as sets $\mathcal C$ and~$\mathcal D$ only subsets of the union of all sets~$\mathcal E_{\textrm{boundary}}^{(k)}$.

We assign to each edge~$i$ from the union of sets
$$
\mathcal E = \mathcal E_{\textrm{inner}} \cup \bigcup_{k=1}^m \mathcal E_{\textrm{boundary}}^{(k)}
$$
\emph{two} Grassmann generators $a_i$ and~$a_i^*$. Denote the submatrix of $f_3$, whose rows and columns correspond to edges in~$\mathcal E$, as~$\tilde f_3$. The generators without stars will correspond to the rows of~$\tilde f_3$ (and, accordingly, to differentials~$dl_i$), while those with stars --- to its columns (and, accordingly, to~$d\omega_i$ or~$d\alpha_i$). We compose the bilinear form
\begin{equation}
f(a,a^*)=\sum_{i,j} (\tilde f_3)_{ij}a_ia_j^*,
\label{f}
\end{equation}
where $a$ denotes the collection of all~$a_i$, while $a^*$ --- of all~$a_j^*$.

It is not hard to verify that the minor of~$\tilde f_3$ consisting of rows with numbers $i_1,\ldots,i_n$ and columns $j_1,\ldots,j_n$ (taken in this exactly order) is equal to the coefficient at $\prod_{k=1}^n a_{i_k} a_{j_k}^*$ in the polynomial $\exp f(a,a^*)$. The coefficients in this polynomial for different numbers of $i$'s and $j$'s are zero. 
Thus, it is natural to call it \emph{generating function} for minors of~$\tilde f_3$.

In general, we call generating function in this paper a polynomial of two sets of Grassmann generators $a_1,\ldots,a_N$ and~$a_1^*,\ldots,a_N^*$, whose coefficient at $\prod_{k=1}^n a_{i_k}a_{j_k}^*$ is equal to the quantity in question, attributed to \emph{ordered} sets $\mathcal C=(i_1,\ldots,i_n)$ and $\mathcal D=(j_1,\ldots,j_n)$, while coefficients at the monomials with different numbers of $a$ and~$a^*$ vanish. If $\mathcal C$ and~$\mathcal D$ are sets of \emph{only boundary} edges, as in section~\ref{sec:many-components}, and it is required to compose the generating function $\Phi (a_{\textrm{boundary}},a_{\textrm{boundary}}^*)$ of minors of~$\tilde f_3$ containing \emph{all} rows and columns from $\mathcal E_{\textrm{inner}}$, as well as some columns from~$\mathcal C$ and rows from~$\mathcal D$, then it is possible to do this by ``eliminating the dependence on variables at inner edges'' using Berezin integral:
\begin{equation}
\Phi (a_{\textrm{boundary}},a_{\textrm{boundary}}^*) = \int \exp f(a,a^*) \prod_{i\in \mathcal E_{\textrm{inner}}} da_i^*\, da_i.
\label{Phi}
\end{equation}
The validity of equality (\ref{Phi}) is easily established using~(\ref{integral_Berezina}).

Summing uo this section, we write out in the following theorem the generating function for invariants~(\ref{inv-b}).

\begin{theorem}
The generating function for invariants $I_{\mathcal C,\mathcal D}(M)$ is
\begin{multline}
\mathbf I_M(a_{\textrm{boundary}},a_{\textrm{boundary}}^*) \\
= \frac{\minor f_1\,\minor f_5}{\minor f_2\,\minor f_4}\, \frac{\prod_{\textrm{over all tetrahedra}}(-6V)}{\prod_{\textrm{over inner edges}}l^2}\, \Phi (a_{\textrm{boundary}},a_{\textrm{boundary}}^*),
\label{I_M}
\end{multline}
where $\Phi (a_{\textrm{boundary}},a_{\textrm{boundary}}^*)$ is defined by formulas (\ref{Phi}) and~(\ref{f}).
\qed
\end{theorem}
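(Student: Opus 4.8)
The plan is to read the statement off the already-verified identity~(\ref{Phi}), after observing that in the definition~(\ref{inv-b}) of $I_{\mathcal C,\mathcal D}(M)$ the only factor depending on the ordered sets $\mathcal C$ and~$\mathcal D$ is $\minor f_3$. Thus the argument is a bookkeeping one: multiply a known generating function by a $(\mathcal C,\mathcal D)$-independent scalar and track what this does to the coefficients.

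First I would isolate that scalar. By the prescriptions of Section~\ref{sec:many-components}, the minor of~$f_1$ (and, symmetrically, of~$f_5$) is taken on the rows/columns attached to the coordinate differentials at the three distinguished inner vertices $A,B,C$; the minors of~$f_2$ and~$f_4$ are taken on the inner edges of the fixed minimal rigid construction of~$M$. None of these selections refers to $\mathcal C$ or~$\mathcal D$, and $\minor f_2$, $\minor f_4$ are unaffected when $\mathcal C$ is replaced by~$\mathcal D$ there since only inner edges occur. The products $\prod_{\textrm{tetrahedra}}(-6V)$ and $\prod_{\textrm{inner edges}}l^2$ are metric data of the triangulated~$M$ and likewise do not see $\mathcal C,\mathcal D$. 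Hence, with
$$
\lambda=\frac{\minor f_1\,\minor f_5}{\minor f_2\,\minor f_4}\cdot\frac{\prod_{\textrm{tetrahedra}}(-6V)}{\prod_{\textrm{inner edges}}l^2},
$$
formula~(\ref{inv-b}) reads $I_{\mathcal C,\mathcal D}(M)=\lambda\cdot\minor f_3$, where, as explained before the theorem, $\minor f_3$ is precisely the minor of~$\tilde f_3$ that contains all rows and columns indexed by~$\mathcal E_{\textrm{inner}}$ together with the rows and columns coming from the edges of $\mathcal D$ and~$\mathcal C$ respectively (in the prescribed orders), with the convention $\minor f_3=0$ when the complex is non-acyclic.

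Next I would invoke~(\ref{Phi}): by its construction and the verification given there, $\Phi(a_{\textrm{boundary}},a_{\textrm{boundary}}^*)$ is exactly the generating function of these minors of~$\tilde f_3$, that is, its coefficient at $\prod_{k=1}^n a_{i_k}a_{j_k}^*$ equals the corresponding $\minor f_3$ attached to the ordered boundary-edge sets $\mathcal C=(i_1,\ldots,i_n)$ and $\mathcal D=(j_1,\ldots,j_n)$, and its coefficients at monomials with unequal numbers of $a$'s and $a^*$'s are zero (automatically so, since $\exp f(a,a^*)$ contains only balanced monomials and Berezin integration over the variables of~$\mathcal E_{\textrm{inner}}$ preserves this). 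Multiplying a generating function by the scalar~$\lambda$ multiplies every coefficient by~$\lambda$ and leaves the balance of $a$- and $a^*$-degrees untouched; therefore the coefficient of $\lambda\,\Phi$ at $\prod_{k}a_{i_k}a_{j_k}^*$ equals $\lambda\cdot\minor f_3=I_{\mathcal C,\mathcal D}(M)$, and coefficients at unbalanced monomials still vanish. By the definition of ``generating function'' adopted in this section, this says precisely that $\mathbf I_M=\lambda\,\Phi$, i.e.\ the right-hand side of~(\ref{I_M}), is the generating function for the invariants $I_{\mathcal C,\mathcal D}(M)$.

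The one point I would take care to state rather than wave past is the matching of orderings and signs: the row/column order meant by ``taken in this exactly order'' for $\minor f_3$ in~(\ref{tau}) and~(\ref{inv-b}) must agree with the order encoded by the monomial $\prod_k a_{i_k}a_{j_k}^*$ in $\exp f(a,a^*)$ and, after integrating out the inner-edge variables, in~$\Phi$. This, however, is already the content of the identification of minors of~$\tilde f_3$ with coefficients of $\exp f$ carried out just before the theorem and of the check of~(\ref{Phi}), so it is only to be cited here; the Grassmann anticommutation relations then reproduce exactly the sign change of~$\tau$ under a reordering of $\mathcal C$ or~$\mathcal D$ noted before~(\ref{inv-b}). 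So the ``main obstacle'' is purely a matter of keeping the conventions straight, and the theorem itself is essentially a repackaging of~(\ref{Phi}).
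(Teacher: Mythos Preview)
Your proposal is correct and is exactly the argument the paper has in mind: the paper gives no explicit proof (just a \qed), treating the theorem as an immediate repackaging of formula~(\ref{Phi}) together with the observation that in~(\ref{inv-b}) only $\minor f_3$ depends on $\mathcal C,\mathcal D$. You have simply spelled out that bookkeeping, including the check that $\minor f_2$ and $\minor f_4$ use only inner (rigid-construction) edges and hence are independent of $\mathcal C,\mathcal D$.
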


\section{Theorem on the composition of invariants under a gluing of manifolds}
\label{sec:gluing}

The general case of gluing several manifolds by some of their boundary components can be reduced to a chain of the following two operations:
\begin{itemize}
\item[(a)] gluing of two connected manifolds by one boundary component and
\item[(b)] gluing of two identical but oppositely oriented boundary components of one connected manifold.
\end{itemize}
Recall that we assume the manifolds to be compact and oriented.

Consider operation~(a). Let manifolds $M_1$ and~$M_2$ be glued into manifold~$M$ by a common boundary component~$\Gamma$. Consider what this implies for matrices of which complexes~(\ref{complex-b}) are composed for $M_1$ and~$M_2$. To begin, we note that matrices of partial derivatives $\bigl(\partial(\omega\textrm{ or }\alpha)_i / \partial l_j\bigr)$ add up, each of the matrices for 
$M_1$ and~$M_2$ is completed with zeros in rows and columns corresponding to edges not contained in the respective manifold. This is due to the fact that minus dihedral angles $(\alpha_i)_{M_1}$ and~$(\alpha_i)_{M_2}$ belonging to the same edge~$i$ lying in~$\Gamma$ but to different manifolds $M_1$ and~$M_2$ add up into a deficit angle~$\omega_i$ (while to every deficit angle the abovementioned zero is added). This will bring about soon, in its turn, a Berezin integral in variables corresponding to edges in~$\mathcal E_{\Gamma}$, where $\mathcal E_{\Gamma}$ is the set of all edges in~$\Gamma$ minus minimal rigid construction.

The rows of the submatrix of~$f_2$ corresponding to the minor in a complex like~(\ref{complex-b}) always belong to a minimal rigid construction of inner edges, joining all inner vertices and also three ones in every boundary component, as explained in section~\ref{sec:many-components}. The columns of the mentioned submatrix of~$f_2$ correspond to all inner vertex coordinates except six of them, and to all boundary component sways, according to section~\ref{sec:many-components}. Minimal rigid construction for~$M$ is obtained as the union of minimal rigid constructions for $M_1$ and~$M_2$ and also for~$\Gamma$. Thus, $\minor f_2$ for~$M$ contains the same rows as the minors for $M_1$ and~$M_2$, plus rows corresponding to edges in~$\Gamma$. As for the columns of the minor for~$M$, we also divide them into three groups. The first corresponds to all coordinate differentials for vertices in~$\Gamma$, except six of them: surface~$\Gamma$ lies in the interior of~$M$, so we have the right to choose six coordinates, whose columns do not enter in~$\minor f_2$, to belong to vertices in~$\Gamma$. Let these be $dx_A, dy_A,\allowbreak dz_A,\allowbreak dy_B,\allowbreak dz_B, dz_C$. The second group corresponds to all coordinate differentials for inner vertices of~$M_1$ and sways of all boundary components of~$M_1$, except~$\Gamma$. Similar columns belonging to~$M_2$ form the third group.

Thus, the following block structure arises for $\minor f_2$ entering in formula~(\ref{I_M}):
$$
\begin{array}{c|c|c|c}
&\kletka{All vertex coordinates in $\Gamma$, except $dx_A,\allowbreak dy_A,\allowbreak dz_A,\allowbreak dy_B,\allowbreak dz_B,\allowbreak dz_C$}&\kletka
{Inner vertex coordinates in $M_1$ and sways of boundary components of $M_1$, except $\Gamma$}&\kletka{Inner vertex coordinates in $M_2$ and sways of boundary components of $M_2$, except $\Gamma$}
\\ \hline 
\kletka{Minimal rigid construction in $\Gamma$}&\sdvigvverh *&\sdvigvverh 0&\sdvigvverh 0
\\ \hline 
\kletka{Minimal rigid construction in $M_1$}&\sdvigvverh *&\sdvigvverh *&\sdvigvverh 0
\\ \hline 
\kletka{Minimal rigid construction in $M_2$}&\sdvigvverh *&\sdvigvverh 0&\sdvigvverh *
\end{array}
$$

The diagonal blocks standing here at the second and third places can be chosen for minors of~$f_2$ in complexes~(\ref{complex-b}) written for $M_1$ and~$M_2$. This choice of minors is somewhat different from the ``standard'' one described after theorem~\ref{th:complex-b}; the corresponding submatrices of matrices~$f_1$, of which the minors must be taken, are now identities: they send $a\in \mathfrak e(3)$ into the same element~$a$ understood as a sway of~$\Gamma$. Certainly, we choose for $f_4$ and~$f_5$ the symmetric minors. After this, it is not hard to verify the formula expressing the generating function for invariants~$M$ in terms of functions for $M_1$ and~$M_2$, which deserves to be formulated as the following theorem.

\begin{theorem}
\begin{equation}
\mathbf I_M = \frac{(-1)^{N_{\Gamma}} \tau_{\Gamma}^2}{\prod_{\textrm{over edges in }\Gamma} l^2} \int \mathbf I_{M_1} \mathbf I_{M_2} \prod_{i\in \mathcal E_{\Gamma}} da_i^*\, da_i.
\label{skleika}
\end{equation}
Here $\tau_{\Gamma}=\frac{\minor g_1}{\minor g_2}$ is the torsion of the acyclic complex
$$
0 \longrightarrow \mathfrak e(3) \stackrel{g_1}{\longrightarrow} (dx)_{\Gamma} \stackrel{g_2}{\longrightarrow} (dl)_{\textrm{over m.r.c. in }\Gamma} \longrightarrow 0,
$$
$N_{\Gamma}$ is the number of vertices in $\Gamma$, ``m.r.c. in~$\Gamma$'' is the minimal rigid construction in~$\Gamma$ that we are using, and $\mathcal E_{\Gamma}$ is the set of edges in~$\Gamma$ not entering in it.
\qed
\end{theorem}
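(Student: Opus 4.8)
\emph{Proof sketch.} The plan is to unfold the definition~(\ref{I_M}) of~$\mathbf I_M$ and factor its three ingredients separately --- the Grassmann factor~$\Phi$, the elementary scalar $\prod(-6V)/\prod l^2$, and the ratio of minors of $f_1,\dots,f_5$ --- the conceptual content of each factorization being short, while the real work lies in the bookkeeping of signs.

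\emph{The Grassmann factor.} As observed just before the theorem, gluing makes the partial-derivative matrices add: the submatrix~$\tilde f_3$ for~$M$, restricted to the edge set~$\mathcal E$ of~$M$, is the sum of the corresponding submatrices for~$M_1$ and~$M_2$, each padded by zeros --- precisely because $(\alpha_i)_{M_1}+(\alpha_i)_{M_2}=\omega_i$ for an edge $i\in\Gamma$, while each genuine deficit angle of~$M_k$ has a zero added by the other manifold. Hence the bilinear form~(\ref{f}) splits, $f_M(a,a^*)=f_{M_1}(a,a^*)+f_{M_2}(a,a^*)$, and, each summand being an even element of the Grassmann algebra, $\exp f_M=\exp f_{M_1}\,\exp f_{M_2}$. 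Since the minimal rigid construction of~$M$ is the union of those of $M_1$, $M_2$ and~$\Gamma$, the edges of~$M$ carrying Grassmann variables split as $\mathcal E_{\textrm{inner}}^M=\mathcal E_{\textrm{inner}}^{M_1}\sqcup\mathcal E_{\textrm{inner}}^{M_2}\sqcup\mathcal E_\Gamma$. I would then carry out the Berezin integration~(\ref{Phi}) for~$M$ group by group: integrating first over the variables of~$\mathcal E_{\textrm{inner}}^{M_1}$, then over those of~$\mathcal E_{\textrm{inner}}^{M_2}$, using the substitution rule of~(\ref{integral_Berezina}) together with the centrality of the even elements $\exp f_{M_2}$ and then~$\Phi_{M_1}$, reduces the integrand to $\Phi_{M_1}\Phi_{M_2}$, so that
\[
\Phi_M=\int \Phi_{M_1}\,\Phi_{M_2}\prod_{i\in\mathcal E_\Gamma}da_i^*\,da_i .
\]

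\emph{The scalar factors.} Gluing along the two-dimensional~$\Gamma$ creates no new tetrahedra, so $\prod(-6V)$ for~$M$ factors as the product of those for $M_1$ and~$M_2$; and the inner edges of~$M$ are those of~$M_1$, those of~$M_2$, and all edges of~$\Gamma$, which accounts for the factor $1/\prod_{\textrm{over edges in }\Gamma}l^2$. For the minors I would recompute $\mathbf I_{M_1},\mathbf I_{M_2}$ with the minimal rigid constructions compatible with that of~$M$ (legitimate, since these invariants do not depend on the choice of $\tau$-chain), taking for $\minor f_2$ exactly the diagonal blocks of the block-triangular matrix displayed before the theorem (and the symmetric blocks for $f_4,f_5$); then the submatrices of $f_1,f_5$ for $M_1,M_2$ are the identity of~$\mathfrak e(3)$, so $\minor f_1=\minor f_5=1$ there, while the $6\times6$ submatrix of $f_1$ and of $f_5$ for~$M$, taken on the vertices $A,B,C\in\Gamma$, is $\minor g_1$, and the block-triangular determinant gives $\minor f_2^M=\minor g_2\cdot\minor f_2^{M_1}\cdot\minor f_2^{M_2}$, the first factor being the restriction of~$g_2$ to the vertex coordinates of~$\Gamma$ remaining after deletion of $dx_A,\dots,dz_C$. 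In every complex~(\ref{complex-b}) the symmetry $f_4=-f_2^{\mathrm T}$ gives $\minor f_4=(-1)^{r}\minor f_2$ with $r$ the size of the square submatrix; combining, the ratio $(\minor f_1\,\minor f_5)/(\minor f_2\,\minor f_4)$ for~$M$ divided by the product of the same ratios for $M_1$ and~$M_2$ equals $(-1)^{r_M-r_{M_1}-r_{M_2}}\,\tau_\Gamma^2$, $\tau_\Gamma=\minor g_1/\minor g_2$. Assembling the three factorizations yields~(\ref{skleika}) up to the sign $(-1)^{r_M-r_{M_1}-r_{M_2}}$.

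\emph{The main obstacle: the sign.} It remains to show this leftover sign equals $(-1)^{N_\Gamma}$ and that nothing else sign-related has been neglected. A degree-of-freedom count shows that a minimal rigid construction of a manifold with $n$ inner vertices and $m$ boundary components has $3n+6m-6$ edges; since the glued manifold satisfies $n_M=n_{M_1}+n_{M_2}+N_\Gamma$ and $m_M=m_{M_1}+m_{M_2}-2$, we get $r_M-r_{M_1}-r_{M_2}=3N_\Gamma-6\equiv N_\Gamma\pmod 2$. The residual checks --- that reordering the measure $\prod da_i^*\,da_i$ into the three groups costs nothing because each pair $da_i^*\,da_i$ behaves as an even object, that the block-triangular determinant and the identifications $\minor f_1^{M_k}=1$, $\minor f_1^M=\minor g_1$ carry no extra sign once row and column orderings are fixed consistently, and that passing from $\Phi_{M_k}$ to $\mathbf I_{M_k}$ merely pulls scalars out of the integral --- are individually routine, but it is their careful coordination, rather than any single idea, that constitutes the real labour of the proof.
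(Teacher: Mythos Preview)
Your proof is correct and follows the same route as the paper: the paper's argument is contained in the discussion preceding the theorem (additivity of the $f_3$-matrices, the displayed block structure of $\minor f_2$, and the non-standard $\tau$-chain for $M_1,M_2$ in which $\minor f_1$ becomes the identity sway of~$\Gamma$), after which it simply declares the verification ``not hard''. You have carried out that verification explicitly, and in particular your derivation of the sign $(-1)^{N_\Gamma}$ from the parity of $r_M-r_{M_1}-r_{M_2}=3N_\Gamma-6$ supplies a step the paper leaves to the reader.
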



Now we consider operation~(b) --- the gluing of two boundary components of the same manifold --- one more elementary gluing operation indicated in the beginning of this section.

\begin{lemma}
\label{lemma:0}
The generating function~(\ref{I_M}) for invariants of manifold~$M$ obtained as a result of operation~(b) is identical zero.
\label{lemma:petlya-0}
\end{lemma}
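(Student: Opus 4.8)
The plan is to show that the matrix $\tilde f_3$ built for the glued manifold $M$ is forced to have a nontrivial kernel (and cokernel) on its way into the generating function, so that $\Phi$, and hence $\mathbf I_M$, must vanish. The geometric origin of this degeneracy is exactly the symmetry that makes complex~(\ref{complex-b}) a complex: when two boundary components of a \emph{single} manifold are glued, there is only \emph{one} sway variable available to serve both copies of the common surface $\Gamma$, whereas operation~(b) would require treating the two copies as independent components before identification. Concretely, after the gluing, the two sets of boundary edges that used to sit in $\mathcal{E}^{(k)}_{\textrm{boundary}}$ for the two glued components become a \emph{single} set $\mathcal{E}_\Gamma$ of inner edges of $M$, but the corresponding partial-derivative matrix acquires the block pattern inherited from the two pieces of $M$ sharing $\Gamma$.

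First I would set up the bookkeeping: write the edges of $M$ as inner edges coming from the interior of the original manifold, the edges of $\Gamma$, and the remaining boundary edges, and describe how $f_2$, $f_3$, $f_4$ degenerate. The key point is that the minimal rigid construction for $M$ must still contain a minimal rigid construction of $\Gamma$ (since $\Gamma$ now lies in the interior, all its edge lengths being fixed is a constraint coming from the interior), and after removing these from the inner-edge tally the matrix $\tilde f_3$ relevant to $\Phi$ contains all of $\mathcal{E}_\Gamma$ among both its rows and its columns.

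Next I would exhibit the degeneracy itself. Because $f_3=f_3^{\mathrm T}$ and because gluing two components of \emph{one} manifold leaves a single common sway rather than two, the rows of $f_2$ for $M$ no longer span enough: the three ``boundary'' rigidity rows that were present twice (once per copy of $\Gamma$) collapse to one set, so the rank drops. Equivalently, there is a nonzero length-differential vector supported on $\mathcal{E}_\Gamma$ lying in $\ker f_3$ (a relative sway of the two faces of $\Gamma$ against each other, which produces zero deficit/dihedral angles because the two sides of $\Gamma$ move rigidly) that is not in the image of $f_2$; hence $\tilde f_3$ has a nontrivial kernel. In Grassmann language: the bilinear form $f(a,a^*)$ of~(\ref{f}) then has a variable $a_i$ (or $a_i^*$) that does not appear, which forces every monomial in $\exp f$ of the top relevant degree to miss that generator, so the Berezin integral over $\prod_{i\in\mathcal{E}_{\textrm{inner}}}da_i^*\,da_i$ in~(\ref{Phi}) — which would need each such generator to be present — yields $0$. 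Therefore $\Phi\equiv 0$, and by~(\ref{I_M}) the generating function $\mathbf I_M$ vanishes identically.

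The main obstacle I expect is making the rank-drop argument precise: one must verify that the relative motion of the two identified copies of $\Gamma$ really is a genuine nonzero element of $\ker\tilde f_3$ not already killed by the choice of minimal rigid construction, i.e.\ that the degeneracy survives after the ``standard'' choices of minors for $f_2$ and $f_4$ are made. This is essentially the observation that in operation~(b), unlike operation~(a), one cannot split the single copy of $\mathfrak e(3)$ attached to $\Gamma$ into the two independent sways that the construction of section~\ref{sec:many-components} would assign to two distinct boundary components; the left-over ``antidiagonal'' combination sits in the kernel. Once this is isolated, the vanishing of the Berezin integral is a routine consequence of~(\ref{integral_Berezina}).
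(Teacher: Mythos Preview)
Your geometric intuition is on the right track and matches the paper's: the self-gluing creates an extra infinitesimal degree of freedom that obstructs exactness of complex~(\ref{complex-b}) at the term~$(dl)$, forcing every torsion~$\tau$, and hence every~$I_{\mathcal C,\mathcal D}(M)$, to vanish. But two of your concrete claims are wrong and would make the argument fail as written. First, the length variation produced by a ``relative sway of the two faces of~$\Gamma$'' is \emph{not} supported on~$\mathcal E_\Gamma$: a rigid motion of either copy of~$\Gamma$ leaves all edge lengths \emph{inside}~$\Gamma$ unchanged; what changes are the lengths of \emph{inner} edges of~$M$ having one endpoint on~$\Gamma$. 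Second, ``$\tilde f_3$ has a nontrivial kernel'' does not imply ``some generator~$a_i$ fails to appear in~$f(a,a^*)$'': a kernel only means the rows are linearly dependent, not that any single row is zero, so your Berezin-integral vanishing step does not follow. The correct conclusion mechanism is simply that non-acyclicity makes $\tau=0$ by convention, whence~(\ref{inv-b}) and~(\ref{I_M}) give $\mathbf I_M\equiv 0$ directly.

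The paper makes the construction of the obstructing element precise by passing to the infinite cyclic cover~$\tilde M$ associated with the epimorphism $\pi_1(M)\to\mathbb Z$ given by intersection number with~$\Gamma$. Placing the vertices of~$\tilde M$ in~$\mathbb R^3$ so that the generator of the deck group acts by a variable Euclidean motion~$g\in\mathrm E(3)$, and differentiating in~$g$ at the identity, one obtains edge-length variations on~$M$ that keep all deficit angles zero and all remaining boundary dihedral angles unchanged, yet cannot arise from vertex motions and boundary sways of~$M$ itself. Your ``relative sway'' is exactly this construction viewed in one fundamental domain, but without the covering picture it is not clear that the resulting~$\delta l$ is well defined on~$M$ and genuinely outside~$\mathrm{im}\,f_2$; that is precisely the obstacle you flag in your final paragraph, and the covering-space argument is what resolves it.
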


\begin{proof}
The lemma follows from the fact that complex~(\ref{complex-b}) cannot be exact in the term~$(dl)$, because there exist nontrivial variations of edge lengths that leave deficit angles~$\omega$ zero and minus dihedral angles at boundary edges~$\alpha$ unchanged, 
and which cannot be obtained from any vertex coordinate changes and/or sways of boundary components of~$M$. These edge length variations are constructed as follows. For any (closed) loop in~$M$, its integer-valued intersection index with~$\Gamma$ is defined, where surface $\Gamma$ is now the result of gluing two boundary components taken with an arbitrary fixed orientation. It is not hard to produce a loop for which this intersection index equals~$1$. Thus, the intersection index defines an epimorphism $\pi_1(M)\to \mathbb Z$. Consider the corresponding covering~$\tilde M$ of manifold~$M$. 
In it, group~$\mathbb Z$ naturally acts. We place the vertices of~$\tilde M$ into the Euclidean~$\mathbb R^3$ in such way that the location of any vertex~$A'$ in the triangulation of~$\tilde M$, obtained from another vertex~$A$ by the action of element $1\in \mathbb Z$, should be obtained from the location of~$A$ by a Euclidean motion $g\in \mathrm E(3)$ common for all vertices, and any component of~$\partial \tilde M$ should undergo only a Euclidean motion \emph{as a whole} with respect to its ``unperturbed'' location (caused by the initial location of vertices in~$M$ in~$\mathbb R^3$). Changing~$g$, we get the desired nontrivial changes of edge lengths.
\end{proof}

In accordance with this, the natural analogue of formula~(\ref{skleika}) gives also a zero result, as the following lemma shows.

\begin{lemma}
\label{lemma:0'}
Let two (isomorphic, identically triangulated, but oppositely oriented) boundary components of manifold~$M'$ be glued into one surface~$\Gamma$ which lies in manifold~$M$ --- the result of gluing~$M'$. 
Then, the analogue of formula~(\ref{skleika}) gives zero:
\begin{equation}
\mathbf I_M = \frac{(-1)^{N_{\Gamma}} \tau_{\Gamma}^2}{\prod_{\textrm{over edges in }\Gamma} l^2} \int \mathbf I_{M'} \prod_{i\in \mathcal E_{\Gamma}} da_i^*\, da_i \equiv 0.
\label{skleika'}
\end{equation}
It is assumed in formula~(\ref{skleika'}) that the Grassmann variables at the edges of the two boundary components of~$M'$ are also identified when these components are glued into one surface~$\Gamma$.
\end{lemma}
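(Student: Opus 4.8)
The plan is to reduce Lemma~\ref{lemma:0'} to Lemma~\ref{lemma:petlya-0}, which has already been established. The key observation is that the right-hand side of~(\ref{skleika'}) is, by the very construction of the generating function~$\mathbf I_{M'}$ and the definition~(\ref{Phi}) of~$\Phi$ via Berezin integration, nothing but the generating function~(\ref{I_M}) for the manifold~$M$ itself, up to exactly the scalar prefactor $(-1)^{N_{\Gamma}}\tau_{\Gamma}^2/\prod l^2$ that already appears in the gluing formula~(\ref{skleika}). So the first step is to verify that the same bookkeeping of minors, minimal rigid constructions and Berezin integrals that produced~(\ref{skleika}) for operation~(a) goes through verbatim for operation~(b): the matrix $\tilde f_3$ for~$M$ is obtained from that for~$M'$ by adding the rows and columns corresponding to the two glued copies of each edge of~$\Gamma$, the minus-dihedral angles $(\alpha_i)$ from the two sides combine into the deficit angle $\omega_i$ of~$M$, and $\minor f_2$, $\minor f_4$ acquire the block structure with the minimal rigid construction of~$\Gamma$ sitting in the new rows, exactly as in the displayed array preceding~(\ref{skleika}). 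Hence the integral in~(\ref{skleika'}) does compute $\minor f_2^{-1}\minor f_4^{-1}(\text{volume/length factors})\,\Phi$ for~$M$, i.e.\ it equals $\mathbf I_M$.

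The second step is then immediate: by Lemma~\ref{lemma:petlya-0} the complex~(\ref{complex-b}) for a manifold~$M$ produced by operation~(b) fails to be exact at the term~$(dl)$, the obstruction being precisely the family of nontrivial edge-length variations (parametrised by the Euclidean motion $g$ along the $\mathbb Z$-cover $\tilde M$) that kill all $d\omega$ and $d\alpha$ but do not come from vertex displacements or sways. Non-exactness means $\minor f_3 = 0$ — indeed, since the $\mathbb{Z}$-deck parameter gives a nonzero element of $\ker f_4 / \operatorname{im} f_3$ (or, dually, of the relevant cohomology computed by $\tilde f_3$), every maximal minor of $\tilde f_3$ of the size dictated by the $\tau$-chain vanishes — so $\Phi \equiv 0$, and therefore $\mathbf I_M \equiv 0$. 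Combining with the first step, the integral in~(\ref{skleika'}) vanishes identically, which is the assertion.

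I should also record the small consistency check that the identification of Grassmann variables $a_i \leftrightarrow a_i^*$ across the two glued boundary components (spelled out in the statement) is exactly what is needed for the Berezin integral $\int \cdots \prod_{i\in\mathcal E_\Gamma} da_i^*\,da_i$ to make sense and to coincide with the ``elimination of inner variables'' of~(\ref{Phi}): the two copies of a boundary edge of $\Gamma$ become a single inner edge of $M$, carrying a single pair $(a_i, a_i^*)$, and the sum of the two $\alpha$-contributions is what the single $a_i^*$ now pairs with. Once this is noted, the argument above is complete.

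The main obstacle I anticipate is not in the topology — Lemma~\ref{lemma:petlya-0} does all the real work there — but in making the first step genuinely airtight: one must check that the block-triangular structure of $\minor f_2$ and $\minor f_4$, and in particular the appearance of the factor $\tau_\Gamma^2/\prod_{i\in\Gamma} l^2$ and the sign $(-1)^{N_\Gamma}$, is literally the same in case~(b) as in case~(a), even though now a single connected manifold $M'$ (rather than two manifolds $M_1, M_2$) sits on ``both sides'' of~$\Gamma$. This is a matter of carefully re-running the linear-algebra bookkeeping preceding~(\ref{skleika}) with the two columns-groups for $M_1$ and $M_2$ merged into one group for $M'$; it is routine but must be done honestly to be sure no extra sign or volume factor is introduced. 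Everything else then follows from $\minor f_3 = 0$.
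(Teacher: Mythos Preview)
Your proof is correct and follows essentially the same route as the paper: show that the Berezin integral in~(\ref{skleika'}) computes the generating function for the relevant minors of~$f_3$ in the complex~(\ref{complex-b}) written for~$M$, then invoke Lemma~\ref{lemma:petlya-0} to conclude that these minors all vanish because that complex is non-acyclic. One small slip worth fixing: the $\mathbb Z$-cover construction of Lemma~\ref{lemma:petlya-0} produces edge-length variations, hence a nonzero element of $\ker f_3/\operatorname{im} f_2$ (it lives in~$(dl)$), not of $\ker f_4/\operatorname{im} f_3$ as you wrote; your conclusion $\minor f_3=0$ is of course unaffected.
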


\begin{proof}
It is not hard to see that function~$\Phi$ corresponding to our~$\mathbf I_M$ (recall that the connection between them is defined by formula~(\ref{I_M})) is generating for minors of~$f_3$ using which the torsion of the same complex~(\ref{complex-b}) as in the proof of lemma~\ref{lemma:petlya-0} is calculated (for rows of minor of~$f_2$, the edges in the union of minimal rigid constructions in $M'$ and~$\Gamma$ are taken). As, according to the mentioned proof, this complex is non-acyclic, all minors of~$f_3$, and thus $\mathbf I_M$ as well, vanish.
\end{proof}

\section{Geometric invariants and the modification of Atiyah's axioms for anti-commuting variables}
\label{sec:Atiyah-axioms}

We are going to compare the properties of our theory with axioms proposed by M.~Atiyah~\cite{atiyah,atiyah1} for a topological quantum field theory. As mentioned in the Introduction, the most interesting distinction from Atiyah's axioms understood literally arises when we glue a cylinder $\Sigma\times I$, where $\Sigma$ is a two-dimensional surface and $I=[0,1]$, into the closed manifold $\Sigma\times S^1$.

Atiyah's axioms, as applied to three-dimensional manifolds, require that to each \emph{two-dimensional} manifold~$\Sigma$ which can play the role of boundary for some three-dimensional manifold~$M$, should correspond a vector space~$V$, and any~$M$ with boundary~$\Sigma$ must determine a vector in this space; these must satisfy some very natural, from the viewpoint of mathematical physics, requirements, described below.

In our theory, as it is presented in this paper, $\Sigma$ is a closed \emph{triangulated} oriented manifold, equipped moreover with parameters --- ``Euclidean coordinates'' of boundary vertices. Additionally, we choose in every connected component of~$\Sigma$ a minimal rigid construction of edges, put in correspondence to each of the remaining edges~$i$ a pair of Grassmann variables $a_i,a_i^*$, and compose a ``generating function'' for the invariants of~$M$, depending on these variables. Certainly, we could well retain the minimal rigid construction and consider functions of Grassmann variables corresponding to all boundary edges, but this appears unnecessary: we have seen in section~\ref{sec:gluing} that our functions are enough for the main operation with which Atiyah's axioms deal --- the gluing of manifolds. There is no problem to consider the generating function an element of a (finite-dimensional) vector space of all functions of all $a_i,a_i^*$. As concerns the dependence on a triangulation, passing to a different triangulation corresponds, according to section~4 
of paper~\cite{I}, to a linear transform of generating functions. It can be shown that a change of minimal rigid construction leads to their linear transform as well. Thus, the choice of specific triangulation and minimal rigid construction can be regarded as a choice of basis in~$V$.

Next, according to Atiyah's axioms, if $\Sigma$ is a disjoint sum of $\Sigma_1$ and~$\Sigma_2$, then vector space~$V$ is the tensor product $V_1\otimes V_2$ of the corresponding spaces. 
As for our theory, we can say simply that, when we take such disjoint sum, free union of the sets of Grassmann generators is taken. If $\Sigma_1=\partial M_1$ and~$\Sigma_2=\partial M_2$, then the \emph{vector} corresponding to the free union of $M_1$ and~$M_2$ is equal, according to Atiyah, to the tensor product of vectors for $M_1$ and~$M_2$; in our theory, it also comes out with no problem as the product of two generating functions --- \emph{even} (this can be seen from formulas (\ref{Phi}) and~(\ref{I_M})) elements of the Grassmann algebra.

The further axioms, which Atiyah calls involutivity (passing to the dual space~$V^*$ when the orientation of~$\Sigma$ is changed) and multiplicativity 
(the behavior of invariants under a composition of cobordisms), find, together, a parallel in our formula~(\ref{skleika}). It gives exactly the description of the behavior of our invariants under a composition of cobordisms, giving exactly the ``convolution'' in variables related to the surface where the gluing goes (it is for such convolution that Atiyah needs involutivity).

For $\Sigma=\emptyset$ (closed~$M$) space~$V$ is one-dimensional; this agrees with one of Atiyah's axioms ensuring the nontriviality of the theory.\footnote{According to Martyushev's conjecture~\cite{M-diss}, our invariant for a closed~$M$ is expressed in terms of homology group $H_1(M)$.}

One more Atiyah's nontriviality axiom states that the identity operator in~$V$ must correspond to a cylinder $\Sigma\times I$, because the gluing of such cylinder to a manifold must not change its invariants. Hence, Atiyah derives that the invariant for $\Sigma\times S^1$ must be the trace of identical operator, that is, the dimensionality of~$V$. As for our theory, formula~(\ref{skleika}) by no means can be reduced to a product of linear operators, and (\ref{skleika'}) --- to taking a trace of a linear operator and, moreover, according to lemma~\ref{lemma:0'}, formula~(\ref{skleika'}) always gives zero. Can we still calculate the dimensionality of the vector space of generating functions for~$\Sigma$ in out theory?

The following lemma outlines an approach to solving this problem.

\begin{lemma}
Let $a=(a_1,\ldots,a_{2n})$ and~$b=(b_1,\ldots,b_{2n})$ be non-intersecting sets of equal even numbers~$2n$ of Grassmann generators. Consider a linear operator
\begin{equation}
A\colon\; f(b)\mapsto \int f(a) K(a,b)\, da_{2n}\ldots da_1,
\label{operator-ab}
\end{equation}
where kernel $K$ is an arbitrary function of $a$ and~$b$. Then its trace is
\begin{equation}
\Trace A = \int K(a,-a) \, da_{2n}\ldots da_1.
\label{sled-ab}
\end{equation}
\end{lemma}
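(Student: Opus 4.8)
The plan is to evaluate both sides of (\ref{sled-ab}) directly in the standard monomial basis of the Grassmann algebra $\Lambda$ generated by $b_1,\dots,b_{2n}$, on which $A$ is an ordinary linear endomorphism of a $2^{2n}$-dimensional space. Since $A$ depends linearly on the kernel $K$, the trace is linear in $A$, and the right-hand side of (\ref{sled-ab}) is visibly linear in $K$, it is enough to treat $K(a,b)=a_S\,b_T$ for fixed subsets $S,T\subseteq\{1,\dots,2n\}$, where $a_S=\prod_{i\in S}a_i$ and $b_T=\prod_{j\in T}b_j$ are the associated ordered monomials; the general case then follows by linearity. For a subset $U$ I write $U^{c}$ for its complement in $\{1,\dots,2n\}$ and let $\epsilon(U)=\pm1$ be the sign defined by $a_U\,a_{U^{c}}=\epsilon(U)\,a_1\cdots a_{2n}$.

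First I would compute the matrix of $A$ on the monomials $b_U$. By the definition (\ref{operator-ab}), $A(b_U)=\int a_U\,a_S\,b_T\,da_{2n}\cdots da_1$, and the Berezin rules (\ref{integral_Berezina}) make this vanish unless $a_U a_S$ uses each generator exactly once, that is, unless $S=U^{c}$, in which case it equals $\epsilon(U)\,b_T$ (the monomial $b_T$ passes through the iterated integral, contributing only the trivial sign $(-1)^{2n|T|}=1$). Hence $A$ sends $b_{S^{c}}\mapsto\epsilon(S^{c})\,b_T$ and kills every other basis monomial, so its matrix has a single nonzero entry, $\epsilon(S^{c})$, in the row indexed by $T$ and the column indexed by $S^{c}$. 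Therefore $\Trace A=\delta_{T,S^{c}}\,\epsilon(S^{c})$. On the other hand, writing $(-a)_T=\prod_{j\in T}(-a_j)$, we have $K(a,-a)=a_S(-a)_T=(-1)^{|T|}a_S a_T$, whose Berezin integral is $\epsilon(S)$ if $T=S^{c}$ and $0$ otherwise, so the right-hand side of (\ref{sled-ab}) equals $\delta_{T,S^{c}}\,(-1)^{|T|}\epsilon(S)$.

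Everything thus reduces to the combinatorial identity $\epsilon(S^{c})=(-1)^{|S^{c}|}\epsilon(S)$, needed only in the case $T=S^{c}$ where $|T|=|S^{c}|$, and I expect this to be the sole genuine subtlety; it is precisely here that the hypothesis that the number of generators is \emph{even} is used. It follows by comparing $a_S a_{S^{c}}$ with $a_{S^{c}}a_S$: interchanging the two blocks, of degrees $|S|$ and $|S^{c}|$, introduces the sign $(-1)^{|S|\,|S^{c}|}$, so $\epsilon(S^{c})=(-1)^{|S|\,|S^{c}|}\epsilon(S)$, and since $|S|+|S^{c}|=2n$ is even one has $|S|\,|S^{c}|\equiv|S^{c}|^{2}\equiv|S^{c}|\pmod 2$. (Were the total number of generators odd, the same argument would give $\epsilon(S^{c})=\epsilon(S)$ instead and the formula would fail by a degree-dependent sign; the reflection $b\mapsto -a$ is the anticommuting counterpart of the classical relation $\Trace A=\int K(x,x)\,dx$.) Comparing the two expressions obtained above for $\Trace A$ and for the right-hand side, and extending to arbitrary $K$ by linearity, then gives $\Trace A=\int K(a,-a)\,da_{2n}\cdots da_1$, as asserted. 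The only other bookkeeping is the ordering of the iterated Berezin integral $da_{2n}\cdots da_1$, which I do not expect to cause any difficulty.
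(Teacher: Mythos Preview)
Your proof is correct and follows the same approach as the paper's: both reduce by linearity to monomial kernels $K(a,b)=a_Sb_T$ and verify the identity by direct calculation. The paper merely sketches this and checks one illustrative example ($K=a_2\cdots a_{2n}b_1$), whereas you carry the computation through for arbitrary $S,T$ and isolate the sign identity $\epsilon(S^{c})=(-1)^{|S^{c}|}\epsilon(S)$, making explicit where the evenness hypothesis $|S|+|S^{c}|=2n$ enters.
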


\begin{proof}
It is enough to prove the lemma for $K(a,b)$ being monomials in $a$ and~$b$. This is done by a simple direct calculation; we illustrate it on the following example: if $K(a,b)=a_2a_3\ldots a_{2n}b_1$, then the only nonvanishing diagonal matrix element of $A$ appears for $f(b)=b_1$, and this element is~$1$ according to both (\ref{operator-ab}) and~(\ref{sled-ab}).
\end{proof}

So, to find the dimensionality of the vector space of generating functions, we must somehow change the signs at the variables belonging to the upper base of cylinder $\Sigma\times \{1\} \subset \Sigma \times I$. It turns out that this is quite a meaningful operation: one can produce an algebraic complex whose invariant, calculated according to formula~(\ref{inv-closed}), will give the desired. The detailed presentation of this, together with rather complicated explicit calculations for specific~$\Sigma$ of genus $1,2,\ldots$, deserves, in our opinion, a separate paper~\cite{M-prepar}. Here we point out that such complex will be \emph{twisted}~\cite{M-diss,M1}. To be exact, it uses a twisting of its differentials by the following representation of group $\pi_1(\Sigma\times S^1)$: if the projection of a loop onto~$S^1$ makes an odd number of revolutions, then the differentials \emph{change their signs}.

\section{Discussion}
\label{sec:discussion}

Recall that, algebraically, our invariants are based, on the one hand, on rather enigmatic differential formulas, whose structure imitates Pachner moves. These formulas are written in many of our works; the most striking seems to be, by now, the formula, proved in paper~\cite{kiev} using computer algebra.

On the other hand, we are guided by the algebraic theory of Reidemeister torsions. Note that other authors, too, pointed out at a connection between Reidemeister torsions and quantum field theory (although, as far as we know, not at a strict mathematical level): it is enough to remember the lectures by D.~Johnson~\cite{J} cited by Atiyah or a recent work by J.~Barrett and I.~Naish-Guzman~\cite{B-NG}.

As for the results of the present paper, it is remarkable from the physical point of view how anti-commuting variables, that is, essentially, fermions, appear ``by themselves'' from geometry. Putting it informally, one can see in this a cumulative action of two reasons. First, every boundary edge can either enter or not in each of our sets $\mathcal C$ and~$\mathcal D$ which are used in our invariant, similarly to a fermion that is either present or not in a given state. Second, our invariants are based on torsions, that is, a generalization of the determinant. The main role is played by minors of matrix~$f_3$, and under a gluing of manifolds, these matrices add up. What happens in this way to minors, cannot be reduced to simple operator product, because in trying to do so, problems with \emph{signs} of different summands would arise in the sum of products of minors. These sings, however, are taken into account automatically if we introduce generating functions of Grassmann variables and take Berezin integral in variables corresponding to the edges glued together.

We obtain a modification of Atiyah's axioms which can be called Atiyah--Berezin axioms. Note that Atiyah himself readily admits different modifications of his axioms.

For our plans of further work we see, on the one hand, such big tasks as building a discrete analogue of Chern--Simons invariants, and do this for manifolds of any dimension (recall once more the papers~\cite{33,24,15} devoted to four-dimensional manifolds, as well as the fact that algebraic complexes of our type can be built not only using Euclidean geometry~\cite{SL2',kiev,kkm}). On the other hand, a large amount of calculations for specific cases must be done, for instance, those mentioned in the end of section~\ref{sec:Atiyah-axioms}, and which can well lead to a deeper understanding of the theory.

\subsection*{Acknowledgements}

I would like to remember, with a few kind words, my scientific supervisor, the creator of superanalysis, Felix Alexandrovich Berezin, who pointed out to me the widely understood ``integrability'' as a field of scientific activity. This work has been partially supported by Russian Foundation for Basic Research, Grant no.~07-01-96005-r\_ural\_a.

\begin {thebibliography}{99}

\bibitem{atiyah} M.F. Atiyah, Topological quantum field theory, Publications Math\'ematiques de l'IH\'ES 68 (1988) 175--186.

\bibitem{atiyah1} M.~Atiyah, The geometry and physics of knots, Cambridge University Press, 1990.

\bibitem{B-NG} John W. Barrett and Ileana Naish-Guzman, The Ponzano-Regge model. arXiv:0803.3319

\bibitem{B} F.A.~Berezin, Introduction to superanalysis. Mathematical Physics and Applied Mathematics, vol. 9, D.~Reidel Publishing Company, Dordrecht, 1987.

\bibitem{dkm} J.~Dubois, I.G.~Korepanov and E.V.~Martyushev. Euclidean geometric invariant of framed knots in manifolds. arXiv:math/0605164.

\bibitem{J} D.~Johnson, A geometric form of Casson's invariant and its connection to Reidemeister torsion. Unpublished lecture notes.

\bibitem{kkm} R.M.~Kashaev, I.G.~Korepanov and E.V.~Martyushev. An acyclic complex for three-manifolds based on group~$\mathrm{PSL}(2,\mathbb C)$ and cross-ratios. In preparation.

\bibitem{I} I.G.~Korepanov, Geometric torsions and invariants of manifolds with triangulated boundary. Accepted for publication in Theor. Math. Phys., 2008. arXiv:0803.0123

\bibitem{3man4geo} I.G.~Korepanov, Invariants of three-dimensional manifolds from four-dimensional Euclidean geometry. arXiv:math/0611325

\bibitem{33} I.G. Korepanov, Euclidean 4-simplices and invariants of four-dimensional manifolds: I.~Moves $3\to 3$, Theor. Math. Phys. 131 (2002) 765--774.

\bibitem{24} I.G. Korepanov, Euclidean 4-simplices and invariants of four-dimensional manifolds: II.~An algebraic complex and moves $2 \leftrightarrow 4$, Theor. Math. Phys. 133 (2002) 1338--1347.

\bibitem{15} I.G. Korepanov, Euclidean 4-simplices and invariants of four-dimensional manifolds: III.~Moves $1 \leftrightarrow 5$ and related structures, Theor. Math. Phys. 135 (2003) 601--613.

\bibitem{kiev} I.G. Korepanov, Pachner Move $3\to 3$ and Affine Volume-Preserving Geometry in $R^3$. SIGMA 1 (2005), paper 021.

\bibitem{3dcase} I.G. Korepanov, Invariants of PL-manifolds from metrized simplicial complexes. Three-dimensional case, J. Nonlin. Math. Phys. 8 (2001) 196--210.

\bibitem{SL2} I.G. Korepanov and E.V.~Martyushev, A classical solution of the pentagon equation related to the group $\mathrm{SL}(2)$. Theor. Math. Phys. 129:1 (2001), 1320--1324.

\bibitem{SL2'} I.G. Korepanov, $\mathrm{SL}(2)$-solution of the pentagon equation and invariants of three-dimensional manifolds. Theor. Math. Phys. 138:1 (2004) 18--27.

\bibitem{M-diss} E.V.~Martyushev, Geometric invariants three-dimensional manifolds, knots and links. Ph.D. Thesis. Chelyabinsk, South Ural State University, 2007 (in Russian).
http://www.susu.ac.ru/file/thesis.pdf

\bibitem{M1} E.V.~Martyushev, Euclidean simplices and invariants of three-manifolds: a modification of the invariant for lens spaces, Proceedings of the Chelyabinsk Scientific Center 19 (2003), No. 2, 1--5.

\bibitem{M-prepar} E.V.~Martyushev, in preparation.

\end{thebibliography}

\end{document}